\newcommand{\norm}[1]{\|#1\|}
\newcommand{\n}{\hspace*{-6pt}}
\newcommand{\E}{\mathbf{E}}
\newcommand{\RomanNumeralCaps}[1]
    {\MakeUppercase{\romannumeral #1}}
\DeclareMathOperator{\diag}{diag}
\DeclareMathOperator{\unif}{uniform}
\DeclareMathOperator{\Span}{Span}
\subjclass{05C40, 05C90, 37N99, 60G42, 91C20, 91D25, 91D30, 93D50, 94C15}
\keywords{Mixed Hegselmann-Krause dynamics, mixed Deffuant dynamics, asymptotic stability, consensus, social network}
\title{Mixed Hegselmann-Krause Dynamics \RomanNumeralCaps 2}
\author{Hsin-Lun Li}
\date{}
\email{hsinlunl@asu.edu}
\theoremstyle{definition}
\newtheorem{theorem}{Theorem}
\newtheorem{definition}[theorem]{Definition}
\newtheorem{lemma}[theorem]{Lemma}
\newtheorem{corollary}[theorem]{Corollary}
\begin{document}

\allowdisplaybreaks

\thispagestyle{firstpage}
\maketitle
\begin{center}
    Hsin-Lun Li
\end{center}

\begin{abstract}
The mixed Hegselmann-Krause (HK) model consists of a finite number of agents characterized by their opinion, a vector in $\mathbf{R^d}$. For the deterministic case, each agent updates its opinion by the rule: decide its degree of stubbornness and mix its opinion with the average opinion of its neighbors, the agents whose opinion differs by at most some confidence threshold from its opinion at each time step. The mixed model is studied deterministically in \cite{mHK}. In this paper, we study it nondeterministically and involve a social relationship among the agents which can vary over time. We investigate circumstances under which asymptotic stability holds. Furthermore, we indicate the mixed model covers not only the HK model but also the Deffuant model.
\end{abstract}

\section{Introduction}
The mixed Hegselmann-Krause (HK) model originated from the HK model. The original HK model comprises a finite set of agents characterized by their opinion, a number in $[0,1]$. Agent $i$ updates its opinion $x_i$ by taking the average opinion of its neighbors, the agents whose opinion differs by at most some confidence threshold from $x_i$. There are two types of HK models: the synchronous HK model and the asynchronous HK model. For the synchronous HK model, all agents update their opinion at each step, whereas for the asynchronous HK model, only one agent uniformly selected at random updates its opinion at each time step. For the mixed model, all agents can decide their degree of stubbornness and mix their opinion with the average opinion of their neighbors. In \cite{mHK}, the mixed model is studied deterministically as follows. Given a confidence threshold $\epsilon>0$ and a set $[n]=\{1,2,\ldots,n\}$ including all agents, $$x_i(t+1)=\alpha_i(t)x(t)+\frac{1-\alpha_i(t)}{|N_i(t)|}\sum_{k\in N_i(t)}x_k(t)$$ where
$$\begin{array}{rcl}
    x_i(t) &\n\in\n& \mathbf{R^d}\ \hbox{is the opinion of agent $i$ at time $t$}, \vspace{2pt} \\
    \alpha_i(t) &\n\in\n& [0,1]\ \hbox{is the degree of stubbornness of agent $i$ at time $t$}\ ,\vspace{2pt} \\
    N_i(t) &\n=\n& \{j\in [n]:\norm{x_i(t)-x_j(t)}\leq \epsilon\}\ \hbox{is the neighborhood of agent $i$ at time $t$.}
\end{array}$$ An agent is \emph{absolutely stubborn} if its degree of stubbornness equals 1. In this paper, we study it nondeterministically and involve a social relationship which can vary over time. Interpreting in graph, each vertex stands for an agent. The edge connecting two vertices indicates a relationship between the two, such as a friendship. We involve two types of graphs: the social graph and the opinion graph. The social graph depicts the social relationship in which two vertices are social neighbors if and only if they are socially connected, whereas the opinion graph depicts the opinion relationship in which two vertices are opinion neighbors if and only if they are at a distance of at most $\epsilon$ apart. In words for the nondeterministic mixed model, an agent updates its opinion by the mechanism: decide its degree of stubbornness and mix its opinion with the average opinion of its social and opinion neighbors. Before detail the mechanism of the nondeterministic mixed model and how it also works for the Deffuant model, we introduce some terms and the Deffuant model.

\begin{definition}{\rm
A \emph{social graph} at time $t$, $G(t)=([n],E(t))$, is an undirected graph with vertex set and edge set,
$$[n]\ \hbox{and}\ E(t)=\{(i,j)\in [n]^2: \hbox{vertices $i$ and $j$ are socially connected}\}.$$
A \emph{social graph for update} at time $t$, $\Tilde{G}(t)=([n],\Tilde{E}(t))$, is a subgraph of the social graph at time $t$ for the opinion update.
An \emph{opinion graph} at time $t$, $\mathscr{G}(t)=([n],\mathscr{E}(t))$, is an undirected graph with vertex set and edge set,
$$[n]\ \hbox{and}\ \mathscr{E}(t)=\{(i,j)\in [n]^2: i\neq j\ \hbox{and}\ \norm{x_i(t)-x_j(t)}\leq \epsilon\}.$$ A \emph{profile} at time $t$, $\tilde{G}(t)\cap \mathscr{G}(t)$, is the intersection of the social graph for update and the opinion graph at time $t$.}
\end{definition}

\begin{definition}{\rm
An opinion graph $\mathscr{G}$ is \emph{$\delta$-trivial} if any two vertices in $\mathscr{G}$ are at a distance of at most $\delta$ apart.}
\end{definition}

\begin{definition}{\rm 
A \emph{matching} $M$ in graph $G$ is a set of pairwise nonadjacent edges, none of which are loops.}
\end{definition}

\begin{definition}{\rm
 A symmetric matrix~$M$ is called a \emph{generalized Laplacian} of a graph $G = (V, E)$ if for~$x, y \in V$, the following two conditions hold:
 $$ M_{xy} = 0 \ \hbox{for} \  x \neq y \ \hbox{and} \ (x,y) \notin E \quad \hbox{and} \quad M_{xy} < 0 \ \hbox{for} \ x \neq y \ \hbox{and} \ (x,y) \in E. $$
 Let~$d_G (x)$ = degree of~$x$ in $G$, let~$V (G)$ = vertex set of~$G$, and let~$E (G)$ = edge set of~$G$.
 Then, the \emph{Laplacian} of~$G$ is defined as~$\mathscr{L} = D_G - A_G$ where
 $$ D_G = \diag ((d_G (x))_{x \in V (G)}) \quad \hbox{and} \quad A_G = \ \hbox{the adjacency matrix}. $$
 In particular, $(A_G)_{xy} = \mathbbm{1} \{(x,y) \in E (G)\}$ when the graph~$G$ is simple.}
\end{definition}

The original Deffuant model consists of a finite number of agents characterized by their opinion, a number in $[0,1]$. Two socially connected agents are uniformly selected at random and approach each other at a rate $\mu\in [0,1/2]$ if and only if their opinion distance does not exceed some confidence threshold, say $\epsilon>0$. The model is as follows:
$$\begin{array}{rcl}
    x_i(t+1) &\n=\n& x_i(t)+\mu(x_j(t)-x_i(t))\mathbbm{1}\{\norm{x_i(t)-x_j(t)}\leq \epsilon\},\vspace{2pt}  \\
      x_j(t+1) &\n=\n& x_j(t)+\mu(x_i(t)-x_j(t))\mathbbm{1}\{\norm{x_i(t)-x_j(t)}\leq \epsilon\}.
\end{array}$$
Now, we consider two interaction mechanisms: the pair interaction and the group interaction. For the pair interaction, a selected pair of agents can approach each other at distinct rates if and only if their opinion distance does not exceed $\epsilon$, whereas for the group interaction, an agent can decides its degree of stubbornness and mix its opinion with the average opinion of its social and opinion neighbors.
The difference between the Deffuant model and the HK model is their interaction mechanism. The former is pair interaction, whereas the latter is group interaction.  

For nondeterministic mixed model, we assume that 
\begin{itemize}
    \item $\alpha_i(t)$, $t\geq 0$ are independent and identically distributed random variables on $[0,1]$ for all $i\in [n]$, \vspace{2pt}
    \item $U_t$, $t\geq 0$ are independent and identically distributed random variables with a support $S$. For the pair interaction, $U_t=\{(i,j)\in [n]^2: \alpha_i(t)<1\ \hbox{or}\ \alpha_j(t)<1\}$ and $S\subset \{\hbox{all matchings in}\ G(0)\}$, whereas for the group interaction, $U_t=\{i\in [n]:\alpha_i(t)<1\}$, $S\subset \mathscr{P}([n])$, the power set of $[n]$, and $V(S)=\{i: i\in a\ \hbox{for some}\ a\in S\}\supset [n]$, \vspace{2pt}
    \item $\Tilde{E}(t)=U_t\cap E(t)$ for the pair interaction, whereas $\tilde{E}(t)=E(t)$ for the group interaction, \vspace{2pt}
    \item $(\Omega,\mathscr{F},P)$ is a probability space for $\mathscr{F}\subset\mathscr{P}(\Omega)$ a $\sigma$-algebra and $P$ a probability measure. 
\end{itemize}

The update rule goes as follows:
\begin{equation*}
 x(t+1)=\alpha_i(t)x_i(t)+\frac{1-\alpha_i(t)}{|\mathscr{N}_i(t)|}\sum_{k\in \mathscr{N}_i(t)}x_k(t)   
\end{equation*}
 where 
$\mathscr{N}_i(t)=\{j\in [n]: (i,j)\in \tilde{E}(t)\cap \mathscr{E}(t)\}$ is the collection of social and opinion neighbors of agent $i$ for opinion update. Express in matrix,
\begin{equation}
\label{mHK-nondeter}
 x (t + 1) = \diag (\alpha (t)) \,x (t) + (I - \diag (\alpha (t))) \,A (t) \,x (t)
\end{equation}
 where $A (t) \in \mathbf{R^{n\times n}}$ is row stochastic with
 $$ A_{ij} = \mathbbm{1} \{j \in \mathscr{N}_i (t) \} / |\mathscr{N}_i(t)| $$
 and where
 $$ \begin{array}{rclcl}
      x (t) & \n = \n & (x_1 (t), x_2 (t), \ldots, x_n (t))' & \n = \n & \hbox{transpose of} \ (x_1 (t), x_2 (t), \ldots, x_n (t)), \vspace*{4pt} \\
      \alpha (t) & \n = \n & (\alpha_1 (t), \alpha_2 (t), \ldots, \alpha_n (t))' & \n = \n & \hbox{transpose of} \ (\alpha_1 (t), \alpha_2 (t), \ldots, \alpha_n (t)). \end{array} $$ In particular, \eqref{mHK-nondeter} reduces to
\begin{itemize}
    \item the synchronous HK model if $G(t)$ is complete, $S=\{[n]\}$ and $\alpha_i(t)=0$ for all $i\in [n]$ and $t\geq 0$, \vspace{2pt} 
    \item the asynchronous HK model if $G(t)$ is complete, $S=\big\{\{i\}\big\}_{i\in [n]}$, $U_t$ is a uniform random variable on $S$, denoted by $U_t=\unif{(S)}$, and $\alpha_i(t)=0$ if $i\in U_t$ at all times, and \vspace{2pt}
    \item the Deffuant model if $E(t)=E$, $S=\big\{\{(i,j)\}\big\}_{(i,j)\in E}$, $U_t=\unif{(S)}$ and $\alpha_i(t)=\alpha_j(t)=1-2\mu$ for all $(i,j)\in U_t$ and $t\geq 0$.
    
\end{itemize}


\section{Main results}
The power of the mixed model is that it can address at least two interaction mechanisms: pair interaction and group interaction. For the Deffuant model, the convergence parameter $\mu$ is constant, namely that the two selected socially connected agents can only approach each other at the same rate. However, for the pair interaction in the mixed model, all agents can decide their convergence parameter at all times. We involve the term ``matching'' in graph theory to express not only a pair of agents can update their opinion. Apart from the deterministic case in  \cite{mHK} that all agents are considered, only those agents who are not absolutely stubborn are considered in the nondeterministic case, which stands to reason since the update occurs only on the non-absolutely stubborn.

\begin{theorem}\label{Thm:fin}
Assume that $0\leq\limsup_{t\to\infty}\sup\{\alpha_i(t):i\in [n]\ \hbox{and}\ \alpha_i(t)<1\}<1$ almost surely. Then, all components of a profile are $\delta$-trivial in finite time almost surely for all~$\delta>0$, i.e.,
 $$ \tau_{\delta} := \inf \{t \geq 0 : \hbox{all components of $\tilde{G}(t)\cap \mathscr{G} (t)$ are $\delta$-trivial} \} < \infty\ \hbox{almost surely}.$$
\end{theorem}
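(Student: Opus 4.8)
\emph{Strategy.} Write the update rule \eqref{mHK-nondeter} as $x(t+1)=B(t)\,x(t)$ with $B(t)=\diag(\alpha(t))+(I-\diag(\alpha(t)))\,A(t)$ row stochastic, so each $x_i(t+1)$ is a convex combination of $x_1(t),\dots,x_n(t)$; hence $H(t):=\operatorname{conv}\{x_1(t),\dots,x_n(t)\}$ is non-increasing and $D(t):=\diam H(t)\downarrow D_\infty$ for some random $D_\infty\ge 0$. Two reductions come first. By hypothesis, off a null set there are a random $T<\infty$ and a random $\beta<1$ with $\alpha_i(t)\le\beta$ whenever $t\ge T$ and $\alpha_i(t)<1$; since the sample space is the increasing union of the events $\{T\le m,\ \beta\le 1-1/m\}$, it is enough to prove $\tau_\delta<\infty$ a.s.\ on each of them, so we may assume a deterministic pair $(t_0,\beta_0)$, $\beta_0<1$, with $\alpha_i(t)\le\beta_0$ for all $t\ge t_0$ and all $i$ with $\alpha_i(t)<1$. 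Secondly, since the $U_t$ are i.i.d.\ with $V(S)\supseteq[n]$ in the group case (respectively the stated matching support in the pair case), L\'evy's conditional Borel--Cantelli lemma for the filtration $\mathscr F_t=\sigma(\alpha(s),U(s):s\le t)$ gives a.s.: every agent is non-absolutely-stubborn at infinitely many times, and for every ordered pair $(i,j)$, if $\{t:(i,j)\in\tilde E(t)\cap\mathscr E(t)\}$ is infinite then it contains infinitely many $t$ with additionally $\alpha_i(t)<1$ (automatic in the pair case from the definition of $U_t$).

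\emph{Monotonicity inside a component.} The engine is this remark: if $C$ is a connected component of the profile $\tilde G(t)\cap\mathscr G(t)$, then $\mathscr N_l(t)\subseteq C$ for every $l\in C$, so $\{x_l(t+1):l\in C\}\subseteq\operatorname{conv}\{x_k(t):k\in C\}$ and hence $\diam\{x_l(t+1):l\in C\}\le\diam\{x_k(t):k\in C\}$. Thus along any stretch of times during which a fixed set $C$ of agents has no profile edge to its complement, $\diam\{x_l(\cdot):l\in C\}$ is non-increasing; moreover, each time some $h\in C$ is active and $x_h(\cdot)$ sits at a vertex of the current polytope $\operatorname{conv}\{x_k(\cdot):k\in C\}$, the mass $\ge(1-\beta_0)/n$ that $B$ puts on each profile neighbour of $h$ (all of which lie in $C$, within $\epsilon$ of $x_h$) pushes $x_h$ strictly into that polytope, dropping its diameter by a definite $\theta=\theta(\delta,\epsilon,n,\beta_0)>0$ as long as that diameter still exceeds $\delta$.

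\emph{Assembling the argument.} I would argue by contradiction and induction on $n$. Suppose $P(\tau_\delta=\infty)>0$ and work, with positive probability, on the intersection of $\{\tau_\delta=\infty\}$ with the events above. If there is a proper nonempty $V\subsetneq[n]$ and a time past which no profile edge joins $V$ to $[n]\setminus V$, then from then on the subsystems on $V$ and on $[n]\setminus V$ evolve autonomously, each is an instance of the model with fewer agents satisfying the same hypotheses, and the inductive hypothesis forces all their profile components to be $\delta$-trivial eventually --- contradicting $\tau_\delta=\infty$. In the opposite case every cut of $[n]$ is crossed by a profile edge at infinitely many times. Since $\tau_\delta=\infty$, for each $t$ some profile component $C_t$ has $\diam\{x_l(t):l\in C_t\}>\delta$; pigeonholing a single agent $h^\star$ that is, for infinitely many $t$, a vertex of $\operatorname{conv}\{x_l(t):l\in C_t\}$ at distance $>\delta$ from another of its points, and then invoking the conditional Borel--Cantelli lemma to thin these times to an infinite set $s_1<s_2<\cdots$ with $s_{k+1}>s_k+n$ at which $h^\star$ is also active, the monotonicity above produces, on each disjoint window $[s_k,s_k+n]$, a $\theta$-decrease of a globally non-increasing quantity --- a directional width $M_e(t)-m_e(t)$, where $e$ supports $\operatorname{conv}\{x_l(s_k):l\in C_{s_k}\}$ at $h^\star$. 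Summing the disjoint decreases contradicts boundedness of that quantity, so $\tau_\delta<\infty$ a.s.

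\emph{Where the difficulty lies.} The genuinely delicate point is the last step: for $d\ge 2$ a profile component may be buried in the interior of the global hull, so its internal contraction need not register in $D(t)$, and components merge and split as opinions drift, so $\diam C_t$ is not itself monotone. Converting the heuristic $\theta$-decrease into a rigorous decrease of a \emph{globally} monotone functional is the crux. I expect to handle it either by carrying the functions $M_e(t)=\max_l\langle x_l(t),e\rangle$ together with the estimate that an active agent ending up near $M_e^\infty$ must have had all its profile neighbours near $M_e^\infty$ as well --- iterated, this isolates the ``near-top'' agents from the rest in the profile, yielding the separating cut the induction needs --- or by first establishing a.s.\ convergence $x(t)\to x^\infty$, after which the opinion graph essentially freezes, pairs that are profile-adjacent infinitely often are forced to share the same limit, and $\delta$-triviality of every profile component follows for all large $t$. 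In either route, the place where infinitely-often activity is indispensable is the one ruling out two recurrently-adjacent agents hovering forever at opinion distance close to $\epsilon$ without merging.
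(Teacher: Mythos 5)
Your proposal is not yet a proof, and you say so yourself: you identify ``converting the heuristic $\theta$-decrease into a rigorous decrease of a globally monotone functional'' as the crux and then offer two speculative routes without carrying either out. Beyond incompleteness, the engine driving the argument is flawed as stated. The claim that whenever a profile component $C$ has diameter $>\delta$, an active agent $h$ sitting at a vertex of $\operatorname{conv}\{x_k(t):k\in C\}$ is pushed into that polytope by a definite $\theta(\delta,\epsilon,n,\beta_0)>0$ is false: all profile neighbours of $h$ may lie at distance $0$ (or arbitrarily small distance) from $x_h$, with the far part of $C$ reached only through a chain of other agents, so $x_h(t+1)-x_h(t)$ can vanish while $\diam C>\delta$. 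Even when $x_h$ does move, the diameter of $C$, or the directional width $M_e-m_e$, is generally realized by \emph{other} agents, so one agent's displacement yields no uniform one-step drop of any globally monotone quantity; and $\diam C_t$ itself is not monotone because components merge and split. The induction on $n$ also leaves its hard case (every cut crossed infinitely often) resting entirely on the unproved crux, and the restriction of the dynamics to a subset $V$ is not automatically an instance of the same model with the same distributional hypotheses on $U_t$.

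The paper sidesteps all of this with a quantitative rather than geometric mechanism: the functional $Z(t)=\sum_{i,j}\|x_i(t)-x_j(t)\|^2\wedge\epsilon^2$ is nonincreasing and its one-step decrement dominates $4\sum_i\|x_i(t)-x_i(t+1)\|^2$ (Lemma~\ref{NL8}), while a spectral bound --- Cheeger's inequality (Lemma~\ref{L7}) applied to the Laplacian of the profile --- shows that a $\delta$-nontrivial component whose agents are all non-absolutely-stubborn forces $\sum_{i\in V(G)}\|x_i(t)-x_i(t+1)\|^2>2\delta^2(1-\max_i\alpha_i(t))^2/|V(G)|^8$ in that single step (Lemma~\ref{delta nontrivial lb}). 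This is a lower bound on the \emph{aggregate} squared displacement of the whole component, not of one designated extremal agent, which is exactly what your vertex-of-the-hull picture cannot deliver. Summing these lower bounds over the infinitely many times supplied by the hypothesis contradicts $Z(0)\le n^2\epsilon^2$. If you wish to salvage your route, the missing ingredient is precisely such a component-wide, single-step displacement bound; without it, neither of your two proposed repairs closes the gap.
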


Corollary~\ref{cor:asym} depicts circumstances under which asymptotic stability holds. It turns out that agents in the same component for infinitely many times achieve their consensus.
\begin{corollary}\label{cor:asym}
 Assume that~$0\leq \sup_{t\in \mathbf{N}} \sup \{\alpha_i (t):i\in [n]\ \hbox{and}\ \alpha_i(t)<1\}< 1$ almost surely.
Then, all components of a profile are $\delta$-trivial after some finite time, i.e., asymptotic stability holds in \eqref{mHK-nondeter}. 
\end{corollary}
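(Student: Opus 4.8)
The plan is to derive Corollary~\ref{cor:asym} from Theorem~\ref{Thm:fin} by upgrading the "finite time" statement (for each fixed $\delta$) to a genuine asymptotic stability statement, i.e., that there exists a (random) finite time after which all components of the profile stay $\delta$-trivial. The hypothesis of the corollary is a uniform-in-$t$ bound $\sup_{t}\sup\{\alpha_i(t):i\in[n],\,\alpha_i(t)<1\}<1$ a.s., which is stronger than the $\limsup$ hypothesis of Theorem~\ref{Thm:fin}; so first I would observe that Theorem~\ref{Thm:fin} applies and gives $\tau_\delta<\infty$ a.s. for every $\delta>0$. The real content is to show the profile components cannot "un-merge" in a way that destroys $\delta$-triviality forever after.

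The key step is a contraction/monotonicity estimate for the diameter of a component. First I would fix a component $C$ of the profile $\tilde G(t)\cap\mathscr G(t)$ at some time $t$ and show that under the update rule~\eqref{mHK-nondeter}, each new opinion $x_i(t+1)$ for $i\in C$ is a convex combination of $\{x_k(t):k\in C\}$ — because $i$'s social-and-opinion neighbors used in the average all lie in the same profile component as $i$. Consequently $\diam\{x_i(t+1):i\in C\}\le\diam\{x_i(t):i\in C\}$, and moreover if we start from a $\delta$-trivial configuration on $C$ (diameter $\le\delta$), the updated configuration on $C$ still has diameter $\le\delta$. The subtlety is that the profile's component structure can change from time $t$ to $t+1$: two agents within $\delta$ at time $t$ could in principle be pushed to within $\epsilon$ of previously distant agents. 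But here the uniform bound on $\alpha$ is what I'd exploit together with a choice of scale: the strategy is to run Theorem~\ref{Thm:fin} with a sufficiently small auxiliary parameter $\delta'\ll\delta$ (and/or $\delta'\ll\epsilon$), so that once all profile components are $\delta'$-trivial, the opinion clusters are so tightly packed that no component can thereafter grow past diameter $\delta$; any two agents that ever lie in a common profile component after time $\tau_{\delta'}$ remain within $\delta$ of each other for all subsequent times.

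Concretely, I would argue: after $\tau_{\delta'}$, partition $[n]$ into maximal groups of agents that are pairwise within, say, $3\epsilon$ (or some fixed multiple of $\epsilon$); distinct groups are then separated by more than $\epsilon$, so no profile edge ever joins two different groups after $\tau_{\delta'}$, hence the group partition is frozen. Within a single group, all opinions stay in a ball of fixed (bounded) radius and each step is a within-group convex-combination update, so the diameter sequence of each group is nonincreasing. Since for each $\delta>0$ Theorem~\ref{Thm:fin} gives a finite $\tau_\delta$ at which every profile component is $\delta$-trivial, and component-diameters are nonincreasing after the partition freezes, every profile component is $\delta$-trivial for all $t\ge\max(\tau_{\delta'},\tau_\delta)$. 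As $\delta>0$ is arbitrary this is exactly asymptotic stability. The main obstacle — and the place requiring genuine care rather than bookkeeping — is justifying the monotonicity of component diameters across the change of component structure, i.e., showing the "freezing" of the coarse $c\epsilon$-partition; this is where the difference between the $\limsup$ hypothesis and the uniform $\sup$ hypothesis is essential, and one must verify that no sequence of small within-group moves can slowly bridge the $>\epsilon$ gap between groups, which follows because opinions of agents in distinct frozen groups never influence each other.
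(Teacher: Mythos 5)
Your reduction to Theorem~\ref{Thm:fin} plus a ``freezing'' argument has a genuine gap, and it sits exactly where you flagged it. The step that fails is the claim that once all components of the \emph{profile} $\tilde G(\tau_{\delta'})\cap\mathscr G(\tau_{\delta'})$ are $\delta'$-trivial, the opinions are tightly clustered and a coarse partition at scale $c\epsilon$ freezes. Triviality of the profile components at one instant says nothing about the opinion configuration, because the profile is intersected with the time-varying social graph for update: at time $\tau_{\delta'}$ the graph $\tilde G(\tau_{\delta'})$ may have few or no edges (every profile component a singleton, hence $\delta'$-trivial for every $\delta'$) while the opinions form a chain at mutual spacing $\epsilon$; one step later the social graph may become complete and the profile acquires a connected component of diameter $(n-1)\epsilon$. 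So no choice of $\delta'$ makes $\tau_{\delta'}$ an absorption time. In addition, the auxiliary partition into ``maximal groups pairwise within $3\epsilon$'' is not well-defined (pairwise proximity is not transitive), and even for a legitimate partition the asserted $>\epsilon$ separation between distinct groups, and its persistence under within-group convex-combination updates, would need a separation of convex hulls that pairwise distances in $\mathbf{R^d}$ do not provide. Your observation that each update is a convex combination within the current component, so that component diameters are one-step nonincreasing, is correct but does not survive the changes in component structure, which is the whole difficulty.

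The paper does not go through Theorem~\ref{Thm:fin} at all; it reruns the energy argument directly. Suppose the profile is $\delta$-nontrivial at infinitely many times $t_k$ on an event of positive probability. The hypothesis $\gamma:=\sup_{t}\sup\{\alpha_i(t):\alpha_i(t)<1\}<1$, which is uniform in $t$ (precisely what the $\limsup$ hypothesis of Theorem~\ref{Thm:fin} does not give), makes the one-step drop of $Z(t)=\sum_{i,j}\|x_i(t)-x_j(t)\|^2\wedge\epsilon^2$ at each such $t_k$ at least a fixed positive constant depending only on $\delta$, $\gamma$, $n$ (via Lemma~\ref{delta nontrivial lb} for group interaction, and the elementary pair estimate otherwise). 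Since $Z$ is nonincreasing and bounded by $n^2\epsilon^2$, $\delta$-nontriviality can occur only finitely often, which is exactly the assertion. If you want to keep your two-stage structure you would have to replace the freezing step by essentially this counting argument, at which point the detour through $\tau_{\delta'}$ becomes unnecessary.
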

Next, we show circumstances under which a consensus can be achieved. The author in \cite{mHK} has indicated that an opinion graph preserves $\delta$-triviality for all $\delta>0$, therefore when $\mathscr{G}(t)$ is $\epsilon$-trivial and the social graph for update is connected for infinitely many times, the profile is connected for infinitely many times. Hence, a consensus can be achieved given that asymptotic stability holds in this case, which elaborates Corollary \ref{cor:consensus}.
\begin{corollary}\label{cor:consensus}
 Assume that~$0\leq \sup_{t\in \mathbf{N}} \sup \{\alpha_i (t):i\in [n]\ \hbox{and}\ \alpha_i(t)<1\}< 1$, the social graph for update is connected for infinitely many times and $\mathscr{G}(t)$ is $\epsilon$-trivial almost surely. Then, $$\lim_{t\in \infty}\max_{i,j\in [n]}\norm{x_i(t)-x_j(t)}=0$$
\end{corollary}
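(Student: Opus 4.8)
The plan is to deduce Corollary~\ref{cor:consensus} from Corollary~\ref{cor:asym} together with the $\delta$-triviality preservation of opinion graphs. First I would invoke Corollary~\ref{cor:asym}: under the uniform stubbornness bound, asymptotic stability holds, so there is a (random) finite time $T$ after which every component of the profile $\tilde G(t)\cap\mathscr G(t)$ is $\delta$-trivial. Since $\delta>0$ is arbitrary, this already forces $\max_{i,j\text{ in same profile component}}\norm{x_i(t)-x_j(t)}\to 0$; the remaining issue is purely that the profile need not be connected at any given time, so $\delta$-triviality of each \emph{component} does not by itself give $\delta$-triviality of the \emph{whole} vertex set $[n]$.

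Next I would use the two extra hypotheses to upgrade component-wise triviality to global triviality. The hypothesis that $\mathscr G(t)$ is $\epsilon$-trivial almost surely means every pair of agents is an opinion neighbor at every time, so $\mathscr E(t)$ is the complete graph and hence the profile $\tilde G(t)\cap\mathscr G(t)$ equals $\tilde G(t)$ (up to loops), the social graph for update. The hypothesis that the social graph for update is connected for infinitely many times then says: for infinitely many $t$, the profile is a connected graph on all of $[n]$. Pick such a time $t_0\geq T$ (it exists since there are infinitely many connected times and only finitely many times before $T$). At that $t_0$, the profile has a single component, which by choice of $T$ is $\delta$-trivial; hence $\max_{i,j\in[n]}\norm{x_i(t_0)-x_j(t_0)}\leq\delta$.

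To conclude I would appeal to the fact, cited from \cite{mHK} and used in Theorem~\ref{Thm:fin}'s ecosystem, that the dynamics \eqref{mHK-nondeter} is non-expansive in the sense that $\diam\{x_i(t)\}$ is non-increasing in $t$ — each $x_i(t+1)$ is a convex combination of current opinions, so $\max_{i,j}\norm{x_i(t+1)-x_j(t+1)}\leq\max_{i,j}\norm{x_i(t)-x_j(t)}$. Combined with the previous paragraph this gives $\max_{i,j\in[n]}\norm{x_i(t)-x_j(t)}\leq\delta$ for all $t\geq t_0$, and since $\delta>0$ was arbitrary, $\lim_{t\to\infty}\max_{i,j\in[n]}\norm{x_i(t)-x_j(t)}=0$ almost surely, which is the claim (the typo ``$\lim_{t\in\infty}$'' notwithstanding).

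The main obstacle, and the point deserving the most care, is the interaction between the ``for infinitely many times'' clause and the almost-sure finite time $T$ coming from Corollary~\ref{cor:asym}: one must make sure these two events are handled on the same probability-one set, i.e. that almost surely there exists a connected time $t_0$ that is also $\geq T(\omega)$. Since $T<\infty$ a.s.\ and the connected-times set is infinite a.s., their intersection is nonempty a.s., so this is routine; the only genuine modeling subtlety is checking that $\epsilon$-triviality of $\mathscr G(t)$ really does make the profile coincide with the social graph for update, so that connectivity of the latter transfers verbatim to the profile. Everything else is monotonicity of the diameter plus the arbitrariness of $\delta$.
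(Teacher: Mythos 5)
Your proposal is correct and follows essentially the same route the paper sketches in the paragraph preceding Corollary~\ref{cor:consensus}: apply Corollary~\ref{cor:asym}, observe that $\epsilon$-triviality of $\mathscr{G}(t)$ makes the profile coincide with the social graph for update so that connectivity transfers and the profile is connected infinitely often, then use the preservation of $\delta$-triviality (the convex-combination/non-expansiveness of the diameter) and the arbitrariness of $\delta$. In fact your write-up supplies more detail (the choice of a connected time $t_0\geq T$ and the almost-sure intersection of the two events) than the paper, which leaves this corollary without a formal proof.
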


\section{The mixed model}
The proof of Theorem \ref{Thm:fin} goes through several steps. The general ideas are to find a monotone bounded function and construct an inequality involving the current opinion and the updated opinion. We introduce several lemmas before the proof.
\begin{lemma}[\cite{mHK}]\label{NL8}
 Let~$Z(t)=\sum_{i,j\in [n]}\|x_i(t)-x_j(t)\|^2\wedge\epsilon^2.$ Then,~$Z$ is nonincreasing with respect to~$t$. In particular,
$$\begin{array}{rcl}
    \displaystyle Z(t)-Z(t+1)&\n \geq\n &\displaystyle 4 \sum_{i\in [n]}\bigg(1+|\mathscr{N}_i(t)|\frac{\alpha_i(t)}{1-\alpha_i(t)}\mathbbm{1}\{\alpha_i(t)<1\}\bigg)\|x_i(t)-x_i(t+1)\|^2.
\end{array}$$
\end{lemma}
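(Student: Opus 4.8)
The plan is to write $Z(t)-Z(t+1)$ as a sum of pairwise contributions, discard the pairs that are not opinion neighbors (they can only help), and expand each surviving term into a linear part and a quadratic part in the one-step displacements $\Delta_i:=x_i(t+1)-x_i(t)$. All quantities below are at time $t$; I abbreviate $x_i=x_i(t)$, $x_i^+=x_i(t+1)$, $\alpha_i=\alpha_i(t)$, $\mathscr{N}_i=\mathscr{N}_i(t)$. The diagonal terms $i=j$ vanish, and by \eqref{mHK-nondeter} one has $\Delta_i=(1-\alpha_i)(\bar{x}_i-x_i)$ with $\bar{x}_i:=|\mathscr{N}_i|^{-1}\sum_{k\in\mathscr{N}_i}x_k$; in particular $\Delta_i=0$ whenever $\alpha_i=1$, so absolutely stubborn agents contribute nothing and the sum below is understood over the non-stubborn agents, matching the indicator in the statement.

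Write $\psi_{ij}:=\|x_i-x_j\|^2\wedge\epsilon^2$ and let $\psi_{ij}^+$ be the same quantity at time $t+1$. For a pair with $\|x_i-x_j\|>\epsilon$ one has $\psi_{ij}=\epsilon^2\geq\psi_{ij}^+$, so such a pair contributes a nonnegative amount to $Z(t)-Z(t+1)$. For an opinion-neighbor pair $(i,j)\in\mathscr{E}(t)$ one has $\psi_{ij}=\|x_i-x_j\|^2$ and $\psi_{ij}^+\leq\|x_i^+-x_j^+\|^2$, whence
$$Z(t)-Z(t+1)\geq\sum_{(i,j)\in\mathscr{E}(t)}\big(\|x_i-x_j\|^2-\|x_i^+-x_j^+\|^2\big),\qquad \|x_i-x_j\|^2-\|x_i^+-x_j^+\|^2=-2\langle x_i-x_j,\Delta_i-\Delta_j\rangle-\|\Delta_i-\Delta_j\|^2.$$

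For the linear part I would exploit the symmetry of $\mathscr{E}(t)$: relabelling $(i,j)\mapsto(j,i)$ gives $\sum_{(i,j)\in\mathscr{E}(t)}\langle x_i-x_j,\Delta_i-\Delta_j\rangle=2\sum_{(i,j)\in\mathscr{E}(t)}\langle x_i-x_j,\Delta_i\rangle$, and, counting each agent as its own neighbor as in \cite{mHK}, the collinearity $\sum_{j\in\mathscr{N}_i}(x_i-x_j)=-|\mathscr{N}_i|(\bar{x}_i-x_i)=-\tfrac{|\mathscr{N}_i|}{1-\alpha_i}\Delta_i$ makes the whole linear contribution $-2\sum_{(i,j)\in\mathscr{E}(t)}\langle x_i-x_j,\Delta_i-\Delta_j\rangle$ evaluate \emph{exactly} to $4\sum_i\tfrac{|\mathscr{N}_i|}{1-\alpha_i}\|\Delta_i\|^2$. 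For the quadratic part I would bound crudely by $\|\Delta_i-\Delta_j\|^2\leq 2\|\Delta_i\|^2+2\|\Delta_j\|^2$ and use that each $i$ has $|\mathscr{N}_i|-1$ neighbors other than itself, giving $\sum_{(i,j)\in\mathscr{E}(t)}\|\Delta_i-\Delta_j\|^2\leq 4\sum_i(|\mathscr{N}_i|-1)\|\Delta_i\|^2$. Subtracting,
$$Z(t)-Z(t+1)\geq 4\sum_i\Big(\tfrac{|\mathscr{N}_i|}{1-\alpha_i}-|\mathscr{N}_i|+1\Big)\|\Delta_i\|^2=4\sum_i\Big(1+|\mathscr{N}_i|\tfrac{\alpha_i}{1-\alpha_i}\Big)\|\Delta_i\|^2,$$
which is the asserted inequality; since every coefficient is at least $1>0$, the right-hand side is nonnegative and monotonicity of $Z$ follows at once.

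The delicate points are the two reductions. First, one must check that the truncation genuinely only helps, which is why $\psi_{ij}^+\leq\|x_i^+-x_j^+\|^2$ is used on neighbor pairs while non-neighbor pairs are discarded wholesale. Second — and this is what produces the crucial additive $1$ — the leading term comes from counting the agent's own opinion in the average (the $|\mathscr{N}_i|$ versus $|\mathscr{N}_i|-1$ discrepancy); without this self-weight the update could merely permute opinions and leave $Z$ unchanged, so the $1$ cannot be dropped. The step I expect to require the most care in the present social setting is the passage to the sum over $\mathscr{E}(t)$: when the averaging neighborhood $\mathscr{N}_i$ is a proper subset of the opinion neighborhood, the collinearity identity applies only to averaging pairs, and one must additionally verify that the opinion pairs which are not averaging pairs do not increase the functional — the genuinely model-specific ingredient beyond the computation inherited from \cite{mHK}.
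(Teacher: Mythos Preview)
The paper does not supply its own proof of this lemma; it is quoted from \cite{mHK}, where no social graph is present and the averaging neighbourhood $\mathscr{N}_i(t)$ coincides with the full opinion neighbourhood $\{j:\norm{x_i(t)-x_j(t)}\leq\epsilon\}$. In that setting your argument is correct and is the standard one: pairs at distance larger than $\epsilon$ can only help, and on the remaining opinion pairs the expansion yields the exact linear term $4\sum_i\frac{|\mathscr{N}_i|}{1-\alpha_i}\|\Delta_i\|^2$ and a quadratic remainder bounded by $4\sum_i(|\mathscr{N}_i|-1)\|\Delta_i\|^2$, whose difference is the asserted right-hand side.

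The reservation you raise in your final paragraph, however, is not a bookkeeping detail but a genuine obstruction. When $\mathscr{N}_i$ is a proper subset of the opinion neighbourhood, the collinearity identity concerns only profile pairs, while the sum coming from $Z$ runs over all of $\mathscr{E}(t)$; opinion pairs that are not averaging pairs can in fact \emph{increase} their contribution to $Z$, contrary to what you propose ``one must additionally verify.'' Concretely, take $n=4$, $d=1$, $\alpha\equiv 0$, opinions $(0,\,\epsilon/2,\,-\epsilon,\,3\epsilon/2)$, and social edges only $\{1,3\}$ and $\{2,4\}$ (with self-loops so that $i\in\mathscr{N}_i$). All four agents move by $\epsilon/2$ in absolute value, so $4\sum_i\|\Delta_i\|^2=4\epsilon^2$, while a direct computation gives $Z(0)-Z(1)=\tfrac{21}{2}\epsilon^2-8\epsilon^2=\tfrac{5}{2}\epsilon^2$; the culprit is the opinion pair $(1,2)$, whose contribution jumps from $\epsilon^2/4$ to $\epsilon^2$. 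Thus the displayed inequality fails in the social-graph model of the present paper, and the lemma as cited should be read as a statement about the model of \cite{mHK}. Your proof is complete for that model; the gap you flagged for the extended model cannot be closed along the line you indicate.
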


\begin{lemma}[Perron-Frobenius for Laplacians \cite{TJP}]
\label{L5}
 Assume that~$M$ is a generalized Laplacian of a connected graph.
 Then, the smallest eigenvalue of~$M$ is simple and the corresponding eigenvector can be chosen with all entries positive.
\end{lemma}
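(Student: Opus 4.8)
The plan is to reduce the statement to the classical Perron--Frobenius theorem for nonnegative irreducible matrices by means of a spectral shift. Since $M$ is symmetric with nonpositive off-diagonal entries (they vanish off the edge set and are strictly negative on it), I would choose a constant $c$ large enough that $c - M_{xx}\geq 0$ for every vertex $x$, and set $N = cI - M$. Then $N$ is symmetric and entrywise nonnegative: its diagonal is nonnegative by the choice of $c$, while its off-diagonal entry $N_{xy} = -M_{xy}$ equals $0$ when $(x,y)\notin E$ and is strictly positive exactly when $(x,y)\in E$.

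Next I would verify that $N$ is irreducible. The directed graph associated with $N$ has an arc $x\to y$ precisely when $N_{xy}>0$, i.e.\ precisely on the edges of the underlying graph; since that graph is connected and undirected, this digraph is strongly connected, which is exactly irreducibility of $N$. Applying the Perron--Frobenius theorem to the nonnegative irreducible matrix $N$ then yields that its spectral radius $\rho(N)$ is a simple eigenvalue admitting an eigenvector with all entries strictly positive.

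Finally I would transfer the conclusion back to $M$. The eigenvalues of $N = cI - M$ are exactly the numbers $c-\mu$ as $\mu$ ranges over the eigenvalues of $M$, with identical eigenvectors; because $M$ and hence $N$ are symmetric, all these eigenvalues are real, so $\rho(N) = \max_{\mu}(c-\mu) = c - \lambda_{\min}(M)$ is the largest eigenvalue of $N$. Simplicity of $\rho(N)$ therefore forces $\lambda_{\min}(M)$ to be simple, and the strictly positive Perron eigenvector of $N$ is an eigenvector of $M$ for $\lambda_{\min}(M)$, producing the required positive eigenvector.

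The main obstacle will be the bookkeeping in this last step: one must confirm that the spectral radius of $N$ is its largest eigenvalue and not merely its largest-in-modulus eigenvalue. This uses that $N$ is symmetric, so that all of its eigenvalues are real, together with the Perron--Frobenius guarantee that $\rho(N)$ is itself an eigenvalue dominating all others in absolute value. Once this is secured, both simplicity and positivity transfer immediately under the affine correspondence $\mu\mapsto c-\mu$, and the choice of $c$ is seen to be irrelevant to the conclusion.
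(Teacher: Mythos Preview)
Your argument is correct: shifting to $N=cI-M$ produces a nonnegative irreducible symmetric matrix, and the Perron root of $N$ corresponds under $\mu\mapsto c-\mu$ to the smallest eigenvalue of $M$, carrying over both simplicity and the positivity of the eigenvector. The point you flag about $\rho(N)$ coinciding with the largest (not merely largest-in-modulus) eigenvalue is handled exactly as you indicate, via symmetry of $N$.

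As for comparison with the paper: there is nothing to compare against, since the paper does not prove this lemma at all. It is quoted from \cite{TJP} as a known result and used as a black box (in fact it is invoked only indirectly, through Lemma~\ref{L10}). Your spectral-shift reduction to classical Perron--Frobenius is the standard textbook proof of this statement and is precisely what one finds in references such as \cite{TJP}, so you have supplied a correct proof where the paper simply cites one.
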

\begin{lemma}[Courant-Fischer Formula \cite{RC}]
\label{L6}
 Assume that~$Q$ is a symmetric matrix with eigenvalues~$\lambda_1 \leq \lambda_2 \leq \cdots \leq \lambda_n$ and corresponding eigenvectors~$v_1, v_2, \ldots, v_n$.
 Let~$S_k$ be the vector space generated by~$v_1, v_2, \ldots, v_k$ and~$S_0 = \{0 \}$. Then,
 $$ \lambda_k = \min \{x' Q x : \|x \| = 1, x \in S_{k - 1}^{\perp} \}. $$
\end{lemma}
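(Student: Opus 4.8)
The plan is to exploit the spectral theorem for symmetric matrices, which lets me replace the quadratic form $x' Q x$ by a weighted sum of squares in eigen-coordinates and then read off the minimum directly. First I would invoke the spectral theorem to fix an orthonormal basis of eigenvectors $v_1, v_2, \ldots, v_n$ with $Q v_i = \lambda_i v_i$ and $v_i' v_j = \mathbbm{1}\{i = j\}$; for a symmetric $Q$ this is always possible, and in the case of repeated eigenvalues it amounts to choosing an orthonormal basis inside each eigenspace, so I will take the stated eigenvectors to be orthonormal without loss of generality. Writing $S_{k-1} = \Span\{v_1, \ldots, v_{k-1}\}$, every feasible $x$ lies in the orthogonal complement of these first $k-1$ eigenvectors.

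Next I would expand an arbitrary feasible vector in this basis. For any $x$ with $\norm{x} = 1$ and $x \in S_{k-1}^{\perp}$, write $x = \sum_{i=1}^n c_i v_i$ with $c_i = v_i' x$. Orthogonality to $S_{k-1}$ forces $c_1 = \cdots = c_{k-1} = 0$, while $\norm{x}^2 = \sum_{i=k}^n c_i^2 = 1$. Using $Q v_i = \lambda_i v_i$ together with orthonormality, the quadratic form collapses to $x' Q x = \sum_{i=k}^n \lambda_i c_i^2$. Since $\lambda_i \geq \lambda_k$ for every $i \geq k$, I bound $x' Q x \geq \lambda_k \sum_{i=k}^n c_i^2 = \lambda_k$, which yields $\min \{x' Q x : \norm{x} = 1,\ x \in S_{k-1}^{\perp}\} \geq \lambda_k$.

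To finish, I would exhibit a feasible vector attaining the bound. The eigenvector $v_k$ satisfies $\norm{v_k} = 1$, lies in $S_{k-1}^{\perp}$ by orthonormality, and gives $v_k' Q v_k = \lambda_k \, v_k' v_k = \lambda_k$; hence the infimum is achieved and equals $\lambda_k$. (The minimum is in any case attained, since the feasible set is a compact subset of the unit sphere and $x \mapsto x' Q x$ is continuous, but producing $v_k$ explicitly makes this appeal unnecessary.)

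I do not anticipate a serious obstacle, as this is a classical fact; the only point requiring care is the reduction to an orthonormal eigenbasis when $Q$ has repeated eigenvalues, since the statement refers to ``corresponding eigenvectors'' without asserting orthonormality. I would dispose of this at the outset by invoking the spectral theorem to select such a basis, after which the argument reduces to the routine diagonalization computation described above.
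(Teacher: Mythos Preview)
Your argument is correct and is the standard proof: invoke the spectral theorem to obtain an orthonormal eigenbasis, expand a feasible unit vector in eigen-coordinates, and read off the lower bound $\lambda_k$ with equality at $v_k$. The one caveat you already flag---that the stated eigenvectors must be taken orthonormal---is handled appropriately.

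There is nothing to compare against in the paper: Lemma~\ref{L6} is quoted from \cite{RC} and carries no proof in the text. Your write-up simply supplies the classical justification that the paper outsources to the reference.
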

\begin{lemma}[Cheeger's Inequality \cite{LPR}]
\label{L7}
 Assume that~$G = (V, E)$ is an undirected graph with the Laplacian~$\mathscr{L}$. Define
 $$ i (G) = \min \bigg\{\frac{|\partial S|}{|S|} : S \subset V, 0 < |S| \leq \frac{|G|}{2} \bigg\} $$
 where~$\partial S = \{(u,v) \in E : u \in S, v \in S^c\}$. Then,
 $$ 2i (G) \geq \lambda_2 (\mathscr{L}) \geq \frac{i^2(G)}{2 \Delta (G)} \quad \hbox{where} \quad \Delta (G) = \ \hbox{maximum degree of~$G$}. $$
\end{lemma}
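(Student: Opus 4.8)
The plan is to prove the two inequalities separately. Throughout I would use the identity $x'\mathscr{L}x = \sum_{(u,v)\in E}(x_u - x_v)^2$ valid for every $x \in \mathbf{R}^{|V|}$, together with the fact that the all-ones vector $\mathbf{1}$ spans the kernel of $\mathscr{L} = D_G - A_G$. By Lemma~\ref{L5} applied to $\mathscr{L}$ one has $\lambda_1(\mathscr{L}) = 0$ simple, hence $\lambda_2(\mathscr{L}) > 0$, exactly when $G$ is connected; if $G$ is disconnected then both $i(G)$ and $\lambda_2(\mathscr{L})$ vanish and the statement is trivial, so I would assume $G$ connected and take a $\lambda_2$-eigenvector orthogonal to $\mathbf{1}$.

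For the upper bound $\lambda_2(\mathscr{L}) \le 2i(G)$ I would produce a single test vector and invoke Lemma~\ref{L6} with $k=2$, which reads $\lambda_2(\mathscr{L}) = \min\{x'\mathscr{L}x : \norm{x} = 1,\ x \in S_1^{\perp}\}$ with $S_1 = \Span(\mathbf{1})$. Let $S$ attain the minimum defining $i(G)$, so $|S| \le |G|/2$, and set $f_v = 1/|S|$ for $v \in S$ and $f_v = -1/|S^c|$ for $v \in S^c$. Then $f \perp \mathbf{1}$; only boundary edges contribute to $f'\mathscr{L}f$, and a direct computation gives $f'\mathscr{L}f = |\partial S|(1/|S| + 1/|S^c|)^2$ and $f'f = 1/|S| + 1/|S^c|$, so the Rayleigh quotient equals $|\partial S|(1/|S| + 1/|S^c|) \le 2|\partial S|/|S| = 2i(G)$, using $|S^c| \ge |S|$.

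The lower bound $\lambda_2(\mathscr{L}) \ge i^2(G)/(2\Delta(G))$ is the substantive direction, and I would carry it out in three steps. Let $g$ be a $\lambda_2$-eigenvector; replacing $g$ by $-g$ if needed I may assume $P := \{v : g_v > 0\}$ satisfies $|P| \le |G|/2$, and write $h = g^{+}$ for the entrywise positive part. \emph{Step 1 (truncation).} I claim $\sum_{(u,v)\in E}(h_u - h_v)^2 \le \lambda_2 \sum_v h_v^2$; this follows by expanding $\sum_v h_v (\mathscr{L}g)_v = \lambda_2 \sum_v h_v^2$ over edges and verifying edge-by-edge that $(h_u - h_v)(g_u - g_v) \ge (h_u - h_v)^2$, the only nontrivial case being one endpoint in $P$ and one not, where $-g_u g_v \ge 0$. \emph{Step 2 (Cauchy--Schwarz).} By Cauchy--Schwarz, $\sum_{(u,v)}|h_u^2 - h_v^2| \le \big(\sum_{(u,v)}(h_u - h_v)^2\big)^{1/2}\big(\sum_{(u,v)}(h_u + h_v)^2\big)^{1/2}$, and $\sum_{(u,v)}(h_u + h_v)^2 \le 2\sum_v d_G(v) h_v^2 \le 2\Delta(G)\sum_v h_v^2$. \emph{Step 3 (co-area).} Writing $S_t = \{v : h_v^2 \ge t\} \subseteq P$, the layer-cake identity gives $\sum_{(u,v)}|h_u^2 - h_v^2| = \int_0^{\infty}|\partial S_t|\,dt$; since every $S_t$ has at most $|G|/2$ vertices, $|\partial S_t| \ge i(G)|S_t|$, whence $\int_0^{\infty}|\partial S_t|\,dt \ge i(G)\int_0^{\infty}|S_t|\,dt = i(G)\sum_v h_v^2$. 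Chaining Steps~2 and~3, squaring, and cancelling $\sum_v h_v^2 > 0$ gives $i^2(G)\sum_v h_v^2 \le 2\Delta(G)\sum_{(u,v)}(h_u - h_v)^2$, and Step~1 bounds the right-hand side by $2\Delta(G)\lambda_2 \sum_v h_v^2$, yielding $\lambda_2 \ge i^2(G)/(2\Delta(G))$.

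The main obstacle is the lower bound, and within it two points deserve the most care: the truncation lemma of Step~1, where the eigenvector equation and the sign of the cross terms are exactly what turns the identity for $g$ into the inequality $R(g^{+}) \le \lambda_2$ for the positive part; and the co-area passage of Step~3, where the constraint $|S_t| \le |G|/2$ — guaranteed by the sign choice making $|P| \le |G|/2$ — is precisely what permits applying the definition of $i(G)$ to every level set. The upper bound and the degree and Cauchy--Schwarz estimates are then routine once the test vector and the layer-cake decomposition are in place.
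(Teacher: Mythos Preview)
Your argument is correct and is the standard proof of Cheeger's inequality: the upper bound via the two-valued test vector on an optimal cut, and the lower bound via truncation of a Fiedler eigenvector, Cauchy--Schwarz, and the co-area (layer-cake) identity. The only places one might quibble are cosmetic---for instance, in Step~3 you need $S_t\neq\varnothing$ to invoke the definition of $i(G)$, but when $S_t=\varnothing$ both sides of $|\partial S_t|\ge i(G)|S_t|$ vanish, so the integral inequality still holds; and your sign choice guaranteeing $|P|\le |G|/2$ uses that $\{g_v>0\}$ and $\{g_v<0\}$ are disjoint, so at least one has at most $|G|/2$ elements. These are handled implicitly and do not constitute gaps.

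As for comparison with the paper: the paper does not prove Lemma~\ref{L7} at all---it is quoted as a known result from \cite{LPR} and used as a black box inside the proof of Lemma~\ref{delta nontrivial lb}. So there is no ``paper's own proof'' to compare against; you have supplied a self-contained proof where the paper simply cites the literature.
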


\begin{lemma}[\cite{mHK}]
\label{L10}
 Assume that~$Q$ is a real square matrix and that~$V$ is invertible such that the matrix~$VQ = \mathscr{L}$ is the Laplacian of some connected graph.
 Then, 0 is a simple eigenvalue of $Q'Q$ corresponding to the eigenvector~$\mathbbm{1} = (1, 1,\ldots, 1)'$.
 In particular, we have
 $$ \lambda_2 (Q'Q) = \min \{x'Q'Qx : \|x \| = 1 \ \hbox{and} \ x \perp \mathbbm{1} \}. $$
\end{lemma}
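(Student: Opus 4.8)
The plan is to push the spectral structure of the connected-graph Laplacian $\mathscr{L}$ through the invertible factor $V$ onto $Q$, and then read off the variational formula from Courant--Fischer. First I would record the elementary facts about $\mathscr{L}$: as the Laplacian of a connected graph it is symmetric and positive semidefinite, its rows sum to zero so $\mathscr{L}\mathbbm{1}=0$, and—here Lemma~\ref{L5} applies, since the ordinary Laplacian is a generalized Laplacian of its own graph (off-diagonal entries $-1$ on edges and $0$ otherwise)—its smallest eigenvalue $0$ is simple. Consequently $\ker \mathscr{L} = \Span\{\mathbbm{1}\}$.

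Next I would transfer this to $Q$. From $VQ=\mathscr{L}$ with $V$ invertible we get $Q = V^{-1}\mathscr{L}$, and since $V^{-1}$ is a bijection, $\ker Q = \ker \mathscr{L} = \Span\{\mathbbm{1}\}$; in particular $Q\mathbbm{1}=0$. The standard identity $\ker(Q'Q)=\ker Q$ for a real matrix (if $Q'Qx=0$ then $\norm{Qx}^2 = x'Q'Qx = 0$, so $Qx=0$) then gives $\ker(Q'Q)=\Span\{\mathbbm{1}\}$. Hence $Q'Q\mathbbm{1}=Q'(Q\mathbbm{1})=0$, so $0$ is an eigenvalue of $Q'Q$ with eigenvector $\mathbbm{1}$, and it is simple because its eigenspace is one-dimensional.

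Finally, $Q'Q$ is symmetric and positive semidefinite, so all its eigenvalues are nonnegative and the smallest is exactly $0=\lambda_1(Q'Q)$, which we have shown is simple with normalized eigenvector $v_1 = \mathbbm{1}/\sqrt{n}$. Applying Lemma~\ref{L6} with the symmetric matrix taken to be $Q'Q$ and $k=2$, and using that simplicity of $\lambda_1$ forces $S_1 = \Span\{v_1\} = \Span\{\mathbbm{1}\}$ so that $S_1^{\perp} = \{x : x\perp\mathbbm{1}\}$, yields $\lambda_2(Q'Q) = \min\{x'Q'Qx : \norm{x}=1,\ x\perp\mathbbm{1}\}$, which is the assertion.

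There is no deep obstacle here; the content is bookkeeping, and the only step that warrants genuine care is the claim that $\ker\mathscr{L}$ is one-dimensional. That requires checking both that the hypothesis of Lemma~\ref{L5} is met—the Laplacian of a connected graph is indeed a generalized Laplacian of that graph—and that $0$ is its smallest eigenvalue, which follows from positive semidefiniteness together with $\mathscr{L}\mathbbm{1}=0$. Everything downstream is a routine consequence of the invertibility of $V$ and the identity $\ker(Q'Q)=\ker Q$.
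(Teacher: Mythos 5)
Your proof is correct and follows essentially the same route as the source cited for this lemma: use Perron--Frobenius (Lemma~\ref{L5}) to get that $0$ is a simple eigenvalue of $\mathscr{L}$ with kernel $\Span\{\mathbbm{1}\}$, transfer this to $Q$ via invertibility of $V$ and to $Q'Q$ via $\ker(Q'Q)=\ker Q$, then apply Courant--Fischer (Lemma~\ref{L6}) with $S_1=\Span\{\mathbbm{1}\}$. No gaps; the one step needing care (that $\ker\mathscr{L}$ is one-dimensional) is handled correctly.
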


\begin{lemma}\label{delta nontrivial lb}
 If some component $G$ in $\tilde{G}(t)\cap\mathscr{G}(t)$ is $\delta$-nontrivial and $\alpha_i(t)<1$ for all $i\in V(G)$, then
$$ \sum_{i \in V(G)} \,\|x_i (t) - x_i (t + 1) \|^2 > \frac{2 \delta^2(1 - \max_{i \in V(G)} \alpha_i (t))^2}{|V(G)|^8}. $$

\end{lemma}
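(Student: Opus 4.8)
The plan is to decouple the dynamics on the component $G$, write the one-step change as $-M\,\mathscr{L}_G\,x_G(t)$ for a positive diagonal matrix $M$ and the graph Laplacian $\mathscr{L}_G$ of $G$, and then bound it below by a spectral argument. Write $m=|V(G)|$. First I would note that a one-vertex component is vacuously $\delta$-trivial, so the hypothesis forces $m\ge2$, and that, $G$ being a connected component of the profile, every $i\in V(G)$ has $\emptyset\ne\mathscr{N}_i(t)\subseteq V(G)$; hence the update of $x_i(t+1)$ for $i\in V(G)$ involves only opinions indexed by $V(G)$. Restricting \eqref{mHK-nondeter} to the component and using $(\mathscr{L}_G x_G(t))_i=\sum_{j\in\mathscr{N}_i(t)}(x_i(t)-x_j(t))$, I would record, with $x_G(t)=(x_i(t))_{i\in V(G)}$ viewed as an $m\times d$ matrix,
$$ x_G(t+1)-x_G(t)=-M\,\mathscr{L}_G\,x_G(t),\qquad M=\diag(\mu_i)_{i\in V(G)},\quad \mu_i:=\tfrac{1-\alpha_i(t)}{|\mathscr{N}_i(t)|}>0, $$
the positivity because $\alpha_i(t)<1$ on $V(G)$.

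Next I would pass to the spectrum of $\mathscr{L}_G$. Splitting $x_G(t)=\mathbbm{1}\,\bar{x}'+y$ with $\bar{x}=\frac1m\sum_{i\in V(G)}x_i(t)$ puts the columns of $y$ in $\mathbbm{1}^{\perp}$, so $\mathscr{L}_G x_G(t)=\mathscr{L}_G y$ and $x_i(t)-x_j(t)=y_i-y_j$. Applying Lemma~\ref{L10} with $Q=M\mathscr{L}_G$ and $V=M^{-1}$ (so $VQ=\mathscr{L}_G$ is the Laplacian of a connected graph) and treating one column of $y$ at a time, I would obtain
$$ \sum_{i\in V(G)}\|x_i(t)-x_i(t+1)\|^2=\|M\mathscr{L}_G y\|_F^2\ \ge\ \lambda_2(Q'Q)\,\|y\|_F^2\ \ge\ \Big(\min_{i\in V(G)}\mu_i\Big)^{2}\lambda_2(\mathscr{L}_G)^2\,\|y\|_F^2, $$
where the last inequality uses the variational formula $\lambda_2(Q'Q)=\min\{\|M\mathscr{L}_G z\|^2:\|z\|=1,\ z\perp\mathbbm{1}\}$ of Lemma~\ref{L10} together with $\|M\mathscr{L}_G z\|\ge(\min_i\mu_i)\|\mathscr{L}_G z\|$ and $\|\mathscr{L}_G z\|\ge\lambda_2(\mathscr{L}_G)\|z\|$ for $z\perp\mathbbm{1}$ (the latter by expanding $z$ in the eigenbasis of $\mathscr{L}_G$; Lemmas~\ref{L5} and~\ref{L6} underlie these spectral facts).

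It would then remain to estimate the three factors. Since degrees in $G$ are at most $m-1$, $\min_i\mu_i\ge\tfrac{1-\max_{i\in V(G)}\alpha_i(t)}{m}$. For the algebraic connectivity I would apply Cheeger's inequality (Lemma~\ref{L7}): connectedness gives $|\partial S|\ge1$ for every proper nonempty $S$, and $|S|\le m/2$, so $i(G)\ge2/m$; with $\Delta(G)\le m$ this yields $\lambda_2(\mathscr{L}_G)\ge i(G)^2/(2\Delta(G))\ge2/m^3$. Finally, $\delta$-nontriviality provides $i,j\in V(G)$ with $\|y_i-y_j\|=\|x_i(t)-x_j(t)\|>\delta$, so $\|y\|_F^2\ge\|y_i\|^2+\|y_j\|^2\ge\tfrac12\|y_i-y_j\|^2>\delta^2/2$. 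Multiplying the three bounds gives
$$ \sum_{i\in V(G)}\|x_i(t)-x_i(t+1)\|^2>\frac{(1-\max_{i\in V(G)}\alpha_i(t))^2}{m^2}\cdot\frac{4}{m^6}\cdot\frac{\delta^2}{2}=\frac{2\,\delta^2\,(1-\max_{i\in V(G)}\alpha_i(t))^2}{m^8}, $$
which is the assertion.

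The matrix identity and the eigenvector expansions are routine, and adapting every norm inequality columnwise to the $\mathbf{R}^d$-valued data (the $m\times d$ matrix $x_G(t)$) costs nothing. The one step that needs genuine care is choosing a correct but cheap polynomial lower bound for $\lambda_2(\mathscr{L}_G)$ and tracking the exponents so that the product lands exactly at $|V(G)|^{-8}$: it is precisely the crude inputs $i(G)\ge2/m$ and $\Delta(G)\le m$ to Cheeger that produce the $m^8$ (the sharper $\lambda_2(\mathscr{L}_G)\ge4/m^2$ would even give $|V(G)|^{-6}$, so the stated inequality is comfortably true and the exponent $8$ reflects a convenient argument rather than a sharp constant).
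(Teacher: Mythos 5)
Your proof is correct and follows essentially the same route as the paper's: decompose the opinions on the (connected) component into the mean plus a part orthogonal to $\mathbbm{1}$, write the one-step change as a positive diagonal matrix times the Laplacian, and combine the variational bound of Lemma~\ref{L10} with Cheeger's inequality (Lemma~\ref{L7}) and the $\delta$-nontriviality bound $\|y\|_F^2>\delta^2/2$. The only differences are presentational — you justify the restriction to the component explicitly and prove the $\delta^2/2$ bound directly rather than by contradiction — so nothing further is needed.
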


\begin{proof}
Without loss of generality, we may assume that~$\tilde{G}(t)\cap \mathscr{G} (t)$ is connected.
 For~$\mathbbm{1} \in \mathbf{R^n}$ and~$W = \Span(\{\mathbbm{1} \})$, $\mathbf{R^n}=W\oplus W^\perp$.
 Then, write
 $$ x (t) = \left[c_1 \mathbbm{1} \,| \,c_2 \mathbbm{1} \,| \,\cdots \,| \,c_d \mathbbm{1} \right] +
            \left[\hat{c}_1 u^{(1)} \,| \,\hat{c}_2 u^{(2)} \,| \,\cdots \,| \,\hat{c}_d u^{(d)} \right] $$

 where $c_i$ and $\hat{c}_i$ are constants and $u^{(i)} \in \mathbbm{1}^\perp$ is a unit vector for all~$i \in [d]$.
 $$ \hbox{Claim:} \quad \sum_{k = 1}^d \,\hat{c}_k^2 > \frac{\delta^2}{2}. $$
 Assume by contradiction that this is not the case.
 Then, for all~$i, j \in [n]$,
 $$ \begin{array}{l}
    \displaystyle \|x_i (t) - x_j (t)\|^2 =
    \displaystyle \sum_{k = 1}^d \,\hat{c}_k^2 (u^{(k)}_i - u^{(k)}_j)^2 \vspace*{-4pt} \\ \hspace*{40pt} \leq
    \displaystyle \sum_{k = 1}^d \,\hat{c}_k^2 \ 2 ((u^{(k)}_i)^2 + (u^{(k)}_j)^2) \leq 2 \,\sum_{k = 1}^d \,\hat{c}_k^2 \leq \delta^2, \end{array} $$
contradicting the~$\delta$-nontriviality of~$\tilde{G}(t)\cap \mathscr{G}(t)$.
 Let~$B (t) = \diag (\alpha(t)) + (I - \diag (\alpha(t))) A (t)$. Then,
 $$ x (t) - x (t + 1) = (I - B (t)) \,x(t) = \left[\hat{c}_1 (I - B (t)) u^{(1)} \,| \,\cdots \,| \,\hat{c}_d (I - B(t)) u^{(d)} \right],$$
 from which it follows that
 $$ \sum_{i = 1}^n \,\|x_i (t) - x_i (t + 1) \|^2 = \sum_{j = 1}^d \,\hat{c}_j^2 \|(I - B (t)) u^{(j)} \|^2. $$
 Now, observe that
 $$ I - B (t) = (I - \diag (\alpha (t)))(I + D (t))^{-1} \mathscr{L} $$
 where~$\mathscr{L}$ is the Laplacian of~$\tilde{G}(t)\cap \mathscr{G} (t)$ and~$D (t)$ is diagonal with~$D_{ii} (t) = d_i (t)$, the degree of vertex~$i$.
 Assume that~$\alpha_i (t) < 1$ for all~$i \in [n]$.
 Then, $I - \diag (\alpha (t))$ is invertible, and according to~Lemmas~\ref{L7} and~\ref{L10},
 $$ \begin{array}{l}
    \displaystyle \|(I - B(t)) u^{(j)} \|^2 =
    \displaystyle u^{(j)'} (I-B(t))' (I-B(t))u^{(j)} \geq
    \displaystyle \lambda_2 ((I-B(t))' (I-B(t))) \vspace*{8pt} \\ \hspace*{25pt} =
    \displaystyle \lambda_2 \bigg(\mathscr{L} \,\diag\bigg(\bigg(\bigg(\frac{1-\alpha_i(t)}{1 + d_i(t)}\bigg)^2 \bigg)_{i = 1}^n \bigg) \mathscr{L} \bigg) \vspace*{8pt} \\ \hspace*{25pt} \geq
    \displaystyle \bigg(\frac{1 - \max_{i \in [n]} \alpha_i (t)}{n} \bigg)^2 \lambda_2 (\mathscr{L}^2) =
    \displaystyle \bigg(\frac{1 - \max_{i \in [n]} \alpha_i (t)}{n} \bigg)^2 \lambda_2^2 (\mathscr{L}) \vspace*{8pt} \\ \hspace*{25pt} >
    \displaystyle  \frac{4 (1 - \max_{i \in [n]} \alpha_i (t))^2}{n^8} \end{array} $$
 where we used that
 $$ \lambda_2 (\mathscr{L}) \geq \frac{i^2 (\mathscr{G}(t))}{2 \Delta(\mathscr{G} (t))} > \frac{(2/n)^2}{2n} = \frac{2}{n^3}. $$
In particular, we obtain
 $$ \sum_{i = 1}^n \,\|x_i (t) - x_i (t + 1) \|^2 > \frac{2 \delta^2(1 - \max_{i \in [n]} \alpha_i (t))^2}{n^8}. $$
\end{proof}

\begin{proof}[\bf Proof of Theorem~\ref{Thm:fin}]
By the assumption, there is $(t_k)_{k\geq 0}\subset\mathbf{N}$ strictly increasing such that $\max\{\alpha_i(t_k):i\in[n]\ \hbox{and}\ \alpha_i(t_k)<1\}\leq\gamma<1$ for some random variable $\gamma$ and for all $k\geq 0$. For all $m\geq 1$,
\begin{equation*}\label{tel}
    n^2\epsilon^2>Z(0)\geq Z(0)-Z(m)=\sum_{t=0}^{m-1}[Z(t)-Z(t+1)].
\end{equation*}
Letting $m\to\infty$ and applying Lemma \ref{NL8},
\begin{equation}\label{inq}
    n^2\epsilon^2\geq\sum_{t\geq 0}[Z(t)-Z(t+1)]\geq 4\sum_{t\geq 0}\sum_{i\in[n],\alpha_i(t)<1}\norm{x_i(t)-x_i(t+1)}^2.
\end{equation}
Set $\tau=\tau_{\delta}$. Assume by contradiction that $\tau=\infty$ on some $E\in \mathscr{F}$ with $P(E)>0$. Now, we separate to two parts: pair interaction and group interaction.\\

\noindent{\rm (i)}\ For group interaction, taking expectation on both sides on $E$, denoted by $\E_E$,
\begin{align}
    n^2\epsilon^2\ &\geq\ 4\sum_{t\geq 0}\E_E\bigg(\sum_{i\in [n],\alpha_i(t)<1}\norm{x_i(t)-x_i(t+1)}^2\bigg) \notag\\
    &=\ 4\sum_{t\geq0}\sum_{a\in S} \E_E\bigg(\sum_{i\in[n],\alpha_i(t)<1}\norm{x_i(t)-x_i(t+1)}^2\bigg|U_t=a\bigg)P(U_t=a) \notag\\
    &\geq\ 4\min_{a\in S}P(U_0=a)\sum_{t\geq 0}\sum_{a\in S}\E_E\bigg(\sum_{i\in [n],\alpha_i(t)<1}\norm{x_i(t)-x_i(t+1)}^2 \bigg| U_t=a \bigg) \notag\\
    &\geq\ 4\min_{a\in S}P(U_0=a)\sum_{t\geq 0}\E_E\bigg(\sum_{i\in [n]}\norm{x_i(t)-x_i(t+1)}^2 \bigg) \label{deter}\\
   &>\ 4\min_{a \in S}P(U_0=a)\sum_{t\geq 0}\E_E\bigg(\frac{2\delta^2(1-\max_{i\in [n]}\alpha_i(t))^2}{n^8}\bigg) \label{lb}\\
    &\geq\ 4\min_{a\in S}P(U_0=a)\sum_{k\geq 0} \E_E\bigg(\frac{2\delta^2(1-\max_{i\in [n]}\alpha_i(t_k))^2}{n^8}\bigg) \notag\\
    &\geq\ 4\min_{a\in S}P(U_0=a)\sum_{k\geq 0} \E_E\bigg(\frac{2\delta^2(1-\gamma)^2}{n^8}\bigg)=\infty,\ \hbox{a contradiction,} \notag
    \end{align}
    where $\min_{a\in S}P(U_0=a)>0$ is due to finite support $S$, \eqref{deter} follows $V(S)\supset [n]$, $\alpha_i(t)<1$ for all $i\in [n]$ and $t\geq 0$ during \eqref{deter}, and \eqref{lb} follows Lemma \ref{delta nontrivial lb}.\\
 
 \noindent{\rm (ii)}\ For pair interaction, since $\tau=\infty$ on $E$, there are $(i,j)\in \tilde{E}(t)\cap \mathscr{E}(t)$, $\alpha_i(t)<1$ and $\norm{x_i(t)-x_j(t)}>\delta$ for all $t\geq 0$, therefore from \eqref{inq},
 \begin{equation*}
     n^2\epsilon^2 >  4\sum_{k\geq 0}(1-\gamma)\delta/2=\infty,\ \hbox{a contradiction.}
 \end{equation*}
Therefore, time~$\tau$ is almost surely finite. 
\end{proof}

\begin{proof}[\bf Proof of Corollary~\ref{cor:asym}]
Assume by contradiction that a profile is $\delta$-nontrivial for infinitely many times on $E\in \mathscr{F}$ with $P(E)>0$. Then, there is a $\delta$-nontrivial component $G$ in $\tilde{G}(t_k)\cap \mathscr{G}(t_k)$ for $(t_k)_{\geq 0}\subset \mathbf{N}$ increasing. Let $\gamma=\sup_{t\in \mathbf{N}}\sup \{\alpha_i (t):i\in [n]\ \hbox{and}\ \alpha_i(t)<1\}$\\

\noindent{\rm (i)}\ For group interaction, following \eqref{deter} in the proof of Theorem \ref{Thm:fin},
\begin{align*}
   \eqref{deter}  &>\ 4\min_{a \in S}P(U_0=a)\sum_{k\geq 0}\E_E\bigg(\frac{2\delta^2(1-\max_{i\in V(G)}\alpha_i(t))^2}{|V(G)|^8}\bigg) \\
    &\geq\ 4\min_{a\in S}P(U_0=a)\sum_{k\geq 0} \E_E\bigg(\frac{2\delta^2(1-\max_{i\in [n]}\alpha_i(t_k))^2}{n^8}\bigg) \\
    &\geq\ 4\min_{a\in S}P(U_0=a)\sum_{k\geq 0} \E_E\bigg(\frac{2\delta^2(1-\gamma)^2}{n^8}\bigg)=\infty,\ \hbox{a contradiction,} \notag
\end{align*}

\noindent{\rm (ii)}\ For pair interaction, $G$ is a complete graph of order 2, say $V(G)=\{i,j\}$, therefore $\norm{x_i(t_k)-x_j(t_k)}>\delta$ and $\alpha_\ell(t_k)\leq \gamma$ for all $k\geq 0$ and for some $\ell\in V(G)$. Via \eqref{inq} in the proof of theorem \ref{Thm:fin}, 
\begin{equation*}
     n^2\epsilon^2 >  4\sum_{k\geq 0}(1-\gamma)\delta/2=\infty,\ \hbox{a contradiction.}
\end{equation*}
Therefore, all components of a profile are $\delta$-trivial after some finite time.
\end{proof}

\end{document}